%
%
%
%

\documentclass[12pt,reqno]{amsart}

\usepackage[left=3.9cm,right=3.9cm]{geometry}
\usepackage{amssymb}
\usepackage{graphicx}
\usepackage{amscd}
\usepackage[pagebackref]{hyperref}
\usepackage{color}
\usepackage{tabularx}
\usepackage[table]{xcolor}
\usepackage{float}
\usepackage{graphics,amsmath,amssymb}
\usepackage{amsthm}
\usepackage{amsfonts}
\usepackage{latexsym}
\usepackage{epsf}
\usepackage{xifthen}
\usepackage{mathrsfs}
\usepackage{dsfont}
\usepackage{makecell}
\usepackage{subfig}
\usepackage{amsmath}
\allowdisplaybreaks[4]
\usepackage{listings}
\usepackage{etoolbox}
\usepackage{fancyhdr}
\usepackage{pdflscape}
\usepackage[title,toc,titletoc]{appendix}
\usepackage{enumitem}
\usepackage[noadjust]{cite}
\usepackage{tikz}
\usetikzlibrary{automata,positioning,arrows}
\usepackage{young}
\usepackage[object=vectorian]{pgfornament} 
\usepackage{lipsum,tikz}
\usepackage{multirow}
\usepackage[OT2,T1]{fontenc}
\usepackage{mathtools}

\hypersetup{
	colorlinks=true, 
	linktoc=all, 
	linkcolor=blue} 

\setlength{\parskip}{2pt}

\numberwithin{equation}{section}

\theoremstyle{theorem}
\newtheorem{theorem}{Theorem}[section]
\newtheorem*{theorem*}{Theorem}

\newtheorem{corollary}[theorem]{Corollary}
\newtheorem{lemma}[theorem]{Lemma}

\newtheorem{conjecture}[theorem]{Conjecture}

\providecommand{\customgenericname}{}
\newcommand{\newcustomtheorem}[2]{%
	\newenvironment{#1}[1]
	{%
		\renewcommand\customgenericname{#2}%
		\renewcommand\theinnercustomgeneric{##1}%
		\innercustomgeneric
	}
	{\endinnercustomgeneric}
}
\newcustomtheorem{ctheorem}{Theorem}
\newcustomtheorem{clemma}{Lemma}

\theoremstyle{definition}

\newtheorem*{example*}{Example}
\newtheorem*{examples*}{Examples}
\newtheorem{remark}[theorem]{Remark}
\newtheorem*{remark*}{Remark}
\newtheorem*{remarks*}{Remarks}
\newtheorem*{note*}{Note}

\newtheoremstyle{named}{}{}{\itshape}{}{\bfseries}{.}{.5em}{#1\thmnote{ #3}}
\theoremstyle{named}

\makeatletter
\patchcmd{\subsection}{\bfseries}{\bfseries\boldmath}{}{}
\makeatother



\newcommand{\arxiv}[1]{\href{https://arxiv.org/abs/#1}{arXiv:#1}}

\newcommand{\qbinom}[2]{{#1\brack #2}}


\title[Andrews--El Bachraoui conjecture]{Proof of the Andrews--El Bachraoui positivity conjecture}

\author[S. Chern]{Shane Chern}
\address[S. Chern]{Fakult\"at f\"ur Mathematik, Universit\"at Wien, Oskar-Morgenstern-Platz 1, Wien 1090, Austria}
\email{chenxiaohang92@gmail.com, xiaohangc92@univie.ac.at}

\author[C. Wang]{Chun Wang}
\address[C. Wang]{Department of Mathematics, Shanghai Normal University, Shanghai 200234, People's Republic of China}
\email{wangchun@shnu.edu.cn}

\date{}

\keywords{Andrews--El Bachraoui conjecture, positivity, basic hypergeometric transforms.}

\subjclass[2020]{33D15, 05A20.}

\begin{document}
	
\sloppy

\begin{abstract}
	We prove that for $k\ge 1$, all coefficients in the expansion of the series
	\begin{align*}
		\sum_{n\ge 0} \frac{(q^{2n+2}, q^{2n+2k}; q^2)_\infty}{(q^{2n+1};q^2)_\infty^2} q^{2n}
	\end{align*}
	are positive, by $q$-hypergeometric means. This confirms a recent conjecture of Andrews and El Bachraoui.
\end{abstract}

\maketitle

\section{Introduction}

A \emph{power series} $F(q)=\sum_{n\ge 0} f_n q^n \in \mathbb{R}[[q]]$ is called \emph{nonnegative} if all its coefficients satisfy $f_n\ge 0$; likewise, the series is \emph{positive} if $f_n > 0$ for every index $n\ge 0$. In addition, if the series $F(q)$ becomes a \emph{polynomial} in $\mathbb{R}[q]$, then we say it is nonnegative (resp.~positive) if its coefficients are all nonnegative (resp.~positive) up to the highest power of $q$.

Nonnegative series or polynomials are usually connected with enumerative combinatorics. In particular, if the series or polynomial gives the generating function for certain combinatorial objects, the nonnegativity is guaranteed by the corresponding enumerations. A classical example on this topic is the \emph{$q$-binomial coefficient}, defined by
\begin{align*}
	\qbinom{M}{N}_q:=\begin{cases}
		\dfrac{(q;q)_M}{(q;q)_N(q;q)_{M-N}}, & \text{if $0\le N\le M$},\\[10pt]
		0, & \text{otherwise}.
	\end{cases}
\end{align*}
Here and throughout, we adopt the conventional \emph{$q$-Pochhammer symbols} for $n\in \mathbb{N}\cup\{\infty\}$:
\begin{align*}
	(A;q)_n := \prod_{k=0}^{n-1} (1-Aq^k),
\end{align*}
with the compact notation
\begin{align*}
	(A_1,A_2,\ldots,A_m;q)_n := (A_1;q)_n (A_2;q)_n \cdots (A_m;q)_n.
\end{align*}
For nonnegative integers $M\ge N\ge 0$, it is a standard result \cite[p.~33, Theorem~3.1]{And1998} that $\qbinom{M}{N}_q$ is the generating function for integer partitions into at most $N$ parts, each no larger than $M - N$. This enumerative fact reveals that $\qbinom{M}{N}_q$ is a \emph{positive polynomial} of degree $N(M-N)$.

In their recent work on a bias phenomenon for partitions into two colors, Andrews and El Bachraoui~\cite[eq.~(1.2) with $m=1$]{AEB2025} considered the following $q$-series\footnote{Here we multiply a prefactor $q^{-1}$ by the original series of Andrews and El Bachraoui so that the constant term does not vanish.}:
\begin{align}\label{eq:AEB-F}
	F_{k,1}(q) := \sum_{n\ge 0} \frac{(q^{2n+2}, q^{2n+2k}; q^2)_\infty}{(q^{2n+1};q^2)_\infty^2} q^{2n}.
\end{align}
They observed in \cite[eq.~(2.1)]{AEB2025} that this series gives the generating function for the difference of the enumerations of two disjoint subsets of a special family of bicolored partitions. Along this line, they conjectured the positivity of this series, which implies an inequality for the two partition enumerations.

\begin{conjecture}[Andrews--El Bachraoui, {\cite[Conjecture~2.8]{AEB2025}}]
	For every positive integer $k$, the series $F_{k,1}(q)$ is positive.
\end{conjecture}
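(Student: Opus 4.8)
\textbf{Step 1: recast $F_{k,1}$ as a basic hypergeometric series.} The plan is to work over the base $q^2$. Using $(q^{2n+2};q^2)_\infty=(q^2;q^2)_\infty/(q^2;q^2)_n$, $(q^{2n+2k};q^2)_\infty=(q^{2k};q^2)_\infty/(q^{2k};q^2)_n$ and $(q^{2n+1};q^2)_\infty=(q;q^2)_\infty/(q;q^2)_n$, one rewrites
\[
F_{k,1}(q)=\frac{(q^2,q^{2k};q^2)_\infty}{(q;q^2)_\infty^2}\sum_{n\ge 0}\frac{(q;q^2)_n^2}{(q^2;q^2)_n(q^{2k};q^2)_n}q^{2n}
=\frac{(q^2,q^{2k};q^2)_\infty}{(q;q^2)_\infty^2}\,{}_2\phi_1\!\bigl(q,q;q^{2k};q^2,q^2\bigr).
\]

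\textbf{Step 2: Heine's transformation.} I would then apply Heine's first transformation ${}_2\phi_1(a,b;c;p,z)=\frac{(b,az;p)_\infty}{(c,z;p)_\infty}\,{}_2\phi_1(c/b,z;az;p,b)$ with $p=q^2$, $a=b=q$, $c=q^{2k}$, $z=q^2$. The new infinite products cancel against the prefactor, leaving the one‑parameter shape $F_{k,1}(q)=\frac1{1-q}\,{}_2\phi_1(q^{2k-1},q^2;q^3;q^2,q)=\frac1{1-q}\sum_{n\ge0}\frac{(q^{2k-1};q^2)_n}{(q^3;q^2)_n}q^n$. Invoking the elementary identity $\frac{(q^{2k-1};q^2)_n}{(q^3;q^2)_n}=\frac{(q^{2n+3};q^2)_{k-2}}{(q^3;q^2)_{k-2}}$ (both sides equal $\frac{(q^3;q^2)_{n+k-2}}{(q^3;q^2)_n(q^3;q^2)_{k-2}}$) together with $(1-q)(q^3;q^2)_{k-2}=(q;q^2)_{k-1}$, this collapses, for $k\ge2$, to
\[
F_{k,1}(q)=\frac1{(q;q^2)_{k-1}}\sum_{n\ge0}q^n\,(q^{2n+3};q^2)_{k-2}.
\]
(For $k=1$ the same computation gives $F_{1,1}(q)=\sum_{n\ge0}\frac{q^n}{1-q^{2n+1}}$, which is manifestly positive, so from now on $k\ge 2$.)

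\textbf{Step 3: reduce to a numerator positivity.} Since $\frac1{(q;q^2)_{k-1}}$ is a \emph{positive} power series (generating partitions into odd parts $\le 2k-3$, so every coefficient is $\ge1$) with constant term $1$, and the product of a positive power series with a nonnegative power series having nonzero constant term is again positive (look at the $j=0$ term of the Cauchy product), it suffices to prove that $U_k(q):=\sum_{n\ge0}q^n(q^{2n+3};q^2)_{k-2}$ is a \emph{nonnegative} power series; its constant term is $1$, coming from $n=0$. Expanding $(q^{2n+3};q^2)_{k-2}=\sum_{S\subseteq\{1,\dots,k-2\}}(-1)^{|S|}q^{|S|(2n+1)+2\sigma(S)}$ with $\sigma(S)=\sum_{i\in S}i$ and summing the geometric series in $n$ yields
\[
U_k(q)=\sum_{j=0}^{k-2}(-1)^j\,\frac{q^{j^2+2j}}{1-q^{2j+1}}\,\qbinom{k-2}{j}_{q^2}
=\sum_{S\subseteq\{1,\dots,k-2\}}\ \sum_{\ell\ge0}(-1)^{|S|}\,q^{\,|S|+2\sigma(S)+\ell(2|S|+1)} .
\]
Thus the coefficient of $q^m$ in $U_k$ is the signed count of pairs $(S,\ell)$ with $|S|+2\sigma(S)+\ell(2|S|+1)=m$, and the task is to show that for every $m$ the pairs with $|S|$ even are at least as numerous as those with $|S|$ odd.

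\textbf{The main obstacle.} This last step is the heart of the matter. The difficulty is that every natural transformation keeps a factor of indefinite sign — here $(q^{2n+3};q^2)_{k-2}$, and likewise in the Rogers–Fine form $F_{k,1}(q)=\sum_{n\ge0}\frac{(q^{2k-1};q^2)_n^2}{(q;q^2)_{n+1}^2}\,(1-q^{2k+4n})\,q^{2n^2+2n}$ the factor $(q^{2k-1};q^2)_n^2$ — and experimentation shows that $U_k$ has many vanishing coefficients, so the positivity is tight rather than robust. I would attempt to close the gap by constructing a sign‑reversing involution on the pairs $(S,\ell)$ above, or equivalently by decomposing the defining sum of $U_k$ into manifestly nonnegative blocks (this happens explicitly for $k=2,3,4$, where $F_{k,1}$ splits into a sum of products of factors $\frac1{1-q^{2j+1}}$ times positive polynomials), most likely packaged as an induction on $k$ with an auxiliary family strong enough to absorb the alternating signs. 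Carrying out this combinatorial/identity extraction is the one genuinely hard part; the hypergeometric reductions in Steps 1–3 are routine.
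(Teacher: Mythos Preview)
Your Steps~1--2 are correct and coincide with the paper's starting point: the rewriting of $F_{k,1}$ as a ${}_2\phi_1$ and the Heine transformation yielding $F_{k,1}(q)=\frac{1}{1-q}\sum_{n\ge0}\frac{(q^{2k-1};q^2)_n}{(q^3;q^2)_n}q^n$ (equivalently \eqref{eq:BBK-reform}). Your further reduction to the nonnegativity of $U_k(q)=\sum_{n\ge0}q^n(q^{2n+3};q^2)_{k-2}$ is also valid.

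The genuine gap is exactly where you say it is: you do not prove that $U_k$ is nonnegative. The sign-reversing involution you sketch is not constructed, and an ``induction on $k$ with an auxiliary family'' is a hope rather than an argument. As you yourself note, many coefficients of $U_k$ vanish, so any correct involution must be tight; there is no slack to absorb an imprecise pairing. At this stage the proposal is a reduction, not a proof.

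The paper closes the gap by a different, and in the end simpler, route. Instead of attacking the alternating form of $U_k$, it applies \emph{one more} $q$-hypergeometric transformation (Fine's second transformation, with $(a,b,z,q)\mapsto(q^{2k-3},q,q,q^2)$) directly to the ${}_2\phi_1(q^2,q^{2k-1};q^3;q^2,q)$ you obtained, producing the finite sum
\[
F_{k,1}(q)=\sum_{n=0}^{k-2}\qbinom{k-2}{n}_{q^2}\frac{(q^2;q^2)_n\,q^{2n^2+2n}}{(q;q^2)_{n+1}^2}.
\]
This eliminates the alternating sign entirely; the only remaining issue is whether each factor $(q^2;q^2)_n/(q;q^2)_{n+1}^2$ is a nonnegative series. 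That is the paper's key lemma (Theorem~\ref{th:ratio-n} and Corollary~\ref{coro:ratio-n+1}), proved by a short ``substitution of one'' trick: one multiplies by the identity $\sum_{i=0}^{n}\frac{(q;q^2)_i(q;q^2)_{n-i}}{(q^2;q^2)_i(q^2;q^2)_{n-i}}q^i=1$ (a Chu--Vandermonde evaluation) and regroups to exhibit $(q^2;q^2)_n/(q;q^2)_n^2$ as a sum of manifestly nonnegative terms, with the $i=0$ term $1/(q;q^2)_n$ positive. The $n=0$ summand of $F_{k,1}$ is then $1/(1-q)^2$, giving positivity.

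So the missing idea is not combinatorial but hypergeometric: rather than proving your alternating $U_k$ nonnegative, transform once more to a representation whose terms are individually nonnegative, and isolate the residual positivity question into a clean one-variable statement about $(q^2;q^2)_n/(q;q^2)_{n+1}^2$. Your Rogers--Fine form still carries $(q^{2k-1};q^2)_n^2$ and hence does not achieve this; the specialization of Fine's second transformation used in the paper does.
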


This conjecture was confirmed by Andrews and El Bachraoui~\cite[eqs.~(2.2), (2.3) and (2.5), and Theorem~2.6]{AEB2025} for $1\le k\le 4$. In a subsequent paper, Banerjee, Bringmann, and Keith \cite[Theorems~1.2 and 1.3]{BBK2026} extended the study to the cases where $5\le k\le 10$, and established a weaker result that $F_{k,1}(q)$ is a nonnegative series.

The main objective of this note is to confirm this conjecture of Andrews and El Bachraoui.

\begin{theorem}\label{th:AEB}
	The Andrews--El Bachraoui positivity conjecture is true.
\end{theorem}

Our approach is entirely different from those in \cite{AEB2025} and \cite{BBK2026}, and is indeed much simpler. To be precise, it is built upon the following positivity result of independent interest.

\begin{theorem}\label{th:ratio-n}
	For every positive integer $n$, the series $(q^2;q^2)_n/(q;q^2)_n^2$ is positive.
\end{theorem}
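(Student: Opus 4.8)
The plan is to reduce Theorem~\ref{th:ratio-n} to a two-variable positivity statement that is much more transparent. Setting $Q=q^2$, one has the tautology
\[
\frac{(q^2;q^2)_n}{(q;q^2)_n^2}=\Phi_n(q,q)\Big|_{Q=q^2},\qquad
\Phi_n(u,v):=\frac{(uv;Q)_n}{(u;Q)_n(v;Q)_n},
\]
so it is enough to show that, expanded as a formal power series in $u$ and $v$, the rational function $\Phi_n(u,v)$ has every coefficient in $\mathbb{Z}_{\ge 0}[Q]$; the claimed nonnegativity of the $q$-coefficients of $(q^2;q^2)_n/(q;q^2)_n^2$ then follows by putting $Q=q^2$, $u=v=q$.

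The case $n=\infty$ is elementary and shows exactly what shape the answer must take. Two applications of the $q$-binomial theorem \cite{And1998} (equivalently, the identities $\frac{(at;Q)_\infty}{(t;Q)_\infty}=\sum_{k\ge 0}\frac{(a;Q)_k}{(Q;Q)_k}t^k$ and $\frac{1}{(t;Q)_\infty}=\sum_{k\ge 0}\frac{t^k}{(Q;Q)_k}$) give
\[
\frac{(uv;Q)_\infty}{(u;Q)_\infty(v;Q)_\infty}
=\frac{1}{(v;Q)_\infty}\sum_{i\ge 0}\frac{(v;Q)_i}{(Q;Q)_i}u^i
=\sum_{i,j\ge 0}\frac{Q^{ij}}{(Q;Q)_i(Q;Q)_j}\,u^iv^j,
\]
which is manifestly nonnegative; setting $Q=q^2$, $u=v=q$ already yields $\dfrac{(q^2;q^2)_\infty}{(q;q^2)_\infty^2}=\sum_{i,j\ge 0}\dfrac{q^{2ij+i+j}}{(q^2;q^2)_i(q^2;q^2)_j}$, a positive series.

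The real work is the finite analogue. I would expand $\Phi_n(u,v)=\dfrac{1}{(v;Q)_n}\cdot\dfrac{(uv;Q)_n}{(u;Q)_n}$ using $(uv;Q)_n=\sum_{l=0}^{n}(-1)^l Q^{\binom l2}\qbinom{n}{l}_Q(uv)^l$ and $\dfrac{1}{(u;Q)_n}=\sum_{m\ge 0}\qbinom{n+m-1}{m}_Q u^m$, collect the coefficient of $u^k$, and try to simplify the resulting alternating sum over $l$ by a terminating $q$-Chu--Vandermonde / $q$-Gauss summation; after dividing by $(v;Q)_n$ the outcome should be visibly nonnegative, with the comparisons of $q$-binomial coefficients handled by Pascal's rule $\qbinom{N}{k}_Q=\qbinom{N-1}{k-1}_Q+Q^k\qbinom{N-1}{k}_Q$. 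Failing a clean closed form, the same statement should yield to a sign-reversing involution: a monomial of $\Phi_n$ is indexed by two functions $f,g\colon\{0,\dots,n-1\}\to\mathbb{Z}_{\ge 0}$ (the geometric-series contributions in $\frac{1}{(u;Q)_n(v;Q)_n}$) together with a set $S\subseteq\{0,\dots,n-1\}$ (the active factors of $(uv;Q)_n$), carrying sign $(-1)^{|S|}$ and $(u,v,Q)$-degree $\bigl(|f|+|S|,\ |g|+|S|,\ \sum_i i(f(i)+g(i))+\sum_{i\in S}i\bigr)$, and one needs a degree-preserving involution on the configurations with $S\ne\varnothing$ that flips the parity of $|S|$. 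Pinning down this identity (or this involution) is the only genuine obstacle; the $n=\infty$ computation is an exact blueprint.

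Given the nonnegativity of $\Phi_n$, strict positivity of $(q^2;q^2)_n/(q;q^2)_n^2$ follows at once. For $n=1$ it is $\frac{1+q}{1-q}$, positive. For $n\ge 2$ it suffices to track the ``edge'' monomials $u^iv^0$: since $\Phi_n(u,0)=\frac{1}{(u;Q)_n}=\sum_{i\ge 0}\qbinom{n+i-1}{i}_Q u^i$ and $\qbinom{n+i-1}{i}_Q$ is a polynomial in $Q$ of degree $(n-1)i$ with every coefficient $\ge 1$ (it enumerates partitions inside an $i\times(n-1)$ rectangle), the substitution $Q\mapsto q^2$, $u\mapsto q$ makes $u^iv^0$ contribute exactly the powers $q^{i},q^{i+2},\dots,q^{(2n-1)i}$. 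Letting $i$ run over all positive integers of a fixed parity, these intervals overlap (because $n\ge 2$) and cover every sufficiently large power of $q$ of that parity, while the small powers and $q^0$ are evidently present; as all coefficients of $\Phi_n$ are nonnegative, no cancellation occurs and every coefficient of the resulting $q$-series is $>0$.
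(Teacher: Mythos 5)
Your reduction to the bivariate statement is sound in principle, and your closing argument (that the edge coefficients of $u^i v^0$, namely $\qbinom{n+i-1}{i}_Q$, already populate every power of $q$ after the substitution $Q\mapsto q^2$, $u=v=q$, so nonnegativity upgrades to positivity) is fine. But the proof has a genuine gap at its center: you never establish the key claim that the coefficients of
\[
\Phi_n(u,v)=\frac{(uv;Q)_n}{(u;Q)_n(v;Q)_n}
\]
lie in $\mathbb{Z}_{\ge 0}[Q]$ for \emph{finite} $n$. The only case you actually prove is $n=\infty$, via two applications of the $q$-binomial theorem; that computation does not descend to finite $n$ (for instance, the truncation $\frac{(v;Q)_k}{(v;Q)_n}=(vQ^n;Q)_{k-n}$ for $k>n$ reintroduces signs, so the infinite-product "blueprint" does not carry over term by term). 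For the finite case you offer two strategies --- simplify an alternating sum of products of $q$-binomial coefficients by a terminating summation, or construct a sign-reversing involution --- and you explicitly concede that "pinning down this identity (or this involution) is the only genuine obstacle." That obstacle is essentially the whole theorem: as stated, this is a plan with a conjectural lemma at its core, not a proof. (Small-$n$ computations do suggest your bivariate nonnegativity is true, e.g.\ for $n=2$ the coefficient of $u^iv^j$ with $i,j\ge 1$ is $Q^{i+j-1}(1+Q)$, so the route is plausible, but it is also a strictly stronger statement than what is needed and may be harder than the target.)

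For comparison, the paper avoids the two-variable refinement entirely. It multiplies $(q^2;q^2)_n/(q;q^2)_n^2$ by the quantity
\[
\sum_{i=0}^{n}\frac{(q;q^2)_i\,(q;q^2)_{n-i}}{(q^2;q^2)_i\,(q^2;q^2)_{n-i}}\,q^i,
\]
which equals $1$ by the second Chu--Vandermonde sum, and the resulting one-variable decomposition
\[
\frac{(q^2;q^2)_n}{(q;q^2)_n^2}=\sum_{i=0}^{n}\qbinom{n}{i}_{q^2}\frac{q^i}{(q^{2i+1};q^2)_{n-i}\,(q^{2(n-i)+1};q^2)_i}
\]
is termwise manifestly nonnegative, with the $i=0$ term $1/(q;q^2)_n$ supplying strict positivity. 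If you want to salvage your approach, the missing lemma is exactly where a terminating ${}_2\phi_1$ evaluation of this Chu--Vandermonde type has to enter; you would need to carry that summation out, not merely gesture at it.
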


\begin{remark}
	The limiting case with $n\to\infty$ states that all coefficients in the series expansion of the infinite product $(q^2;q^2)_\infty/(q;q^2)_\infty^2$ are positive. The only known proof of this fact, to the best of knowledge, is based on its connection with a third-order mock theta function
	\begin{align*}
		\nu(q) := \sum_{n\ge 0} \frac{q^{n(n+1)}}{(q;q^2)_{n+1}}.
	\end{align*}
	Letting $\mathbf{U}_m$ be the \emph{unitizing operator of degree $m$} defined by
	\begin{align*}
		\mathbf{U}_m\left(\sum_{n\ge 0} a_n q^n\right) := \sum_{n\ge 0} a_{mn} q^n,
	\end{align*}
	Hirschhorn and Sellers~\cite[eqs.~(2.5) and (2.8)]{HS2014} observed that
	\begin{align*}
		\mathbf{U}_2\big(\nu(-q)\big) = (q;q)_\infty (-q;q)_\infty^3 = \frac{(q^2;q^2)_\infty}{(q;q^2)_\infty^2}.
	\end{align*}
	Since all coefficients in the expansion of $\nu(-q)$ are positive, so are the even-indexed coefficients.
\end{remark}

\section{A ``substitution of one'' trick}

We start with the proof of Theorem~\ref{th:ratio-n}. This result relies on a trick which we call the ``\emph{substitution of one}.'' In what follows, we adopt the \emph{$q$-hypergeometric functions ${}_{r}\phi_s$} defined by
\begin{align*}
	{}_{r}\phi_s\left(\begin{matrix} A_1,A_2,\ldots,A_r\\ B_1,B_2,\ldots,B_s \end{matrix}; q, z\right):=\sum_{n\ge 0} \frac{(A_1,A_2,\ldots,A_r;q)_n \big((-1)^n q^{\binom{n}{2}}\big)^{s-r+1} z^n}{(q,B_1,B_2,\ldots,B_{s};q)_n}.
\end{align*}

\begin{lemma}
	For every nonnegative integer $m$,
	\begin{align}\label{eq:1-sub}
		\sum_{i=0}^m \frac{(q;q^2)_i (q;q^2)_{m-i}}{(q^2;q^2)_i (q^2;q^2)_{m-i}} q^i = 1.
	\end{align}
\end{lemma}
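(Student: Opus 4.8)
The plan is to recognize the left-hand side as a terminating ${}_2\phi_1$-type sum evaluated at a special point, and to exploit the ``substitution of one'' advertised in the section title: namely, choosing parameters so that a standard $q$-hypergeometric summation (most naturally the $q$-Chu--Vandermonde or the $q$-Gauss sum, in one of its terminating forms) collapses the entire sum to $1$. Concretely, I would first rewrite each summand by pulling out the factor $(q;q^2)_m/(q^2;q^2)_m$ is \emph{not} quite available, so instead I would normalize against $(q;q^2)_m$ alone: using the reversal identities $(q;q^2)_{m-i} = (q;q^2)_m / \bigl((q^{2m-1};q^{-2})_i\bigr)$ and similarly for $(q^2;q^2)_{m-i}$, rewrite
\begin{align*}
	\frac{(q;q^2)_{m-i}}{(q^2;q^2)_{m-i}} = \frac{(q;q^2)_m}{(q^2;q^2)_m}\cdot \frac{(q^{-2m};q^2)_i}{(q^{1-2m};q^2)_i}\, q^{?},
\end{align*}
tracking the power of $q$ carefully. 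This turns the sum into a constant multiple of a terminating ${}_2\phi_1\!\left(\begin{matrix} q, q^{-2m} \\ q^{1-2m}\end{matrix}; q^2, z\right)$ for an explicit $z$ (I expect $z$ to come out to be $q$ or $q^2$, exactly the ``one-like'' specialization that makes the base case transparent).

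The second step is to apply the relevant closed-form evaluation. Since the upper parameters are $q$ and $q^{-2m}$ and the lower parameter is $q^{1-2m}$ with base $q^2$, the combination $(\text{upper})(\text{upper})/(\text{lower}) = q\cdot q^{-2m}/q^{1-2m} = 1$, which is precisely the balancing/argument condition under which the $q$-Chu--Vandermonde sum
\begin{align*}
	{}_2\phi_1\!\left(\begin{matrix} a, q^{-N} \\ c \end{matrix}; q, \frac{cq^N}{a}\right) = \frac{(c/a;q)_N}{(c;q)_N}
\end{align*}
applies (with $q \mapsto q^2$, $a = q$, $N = m$, $c = q^{1-2m}$, and argument $cq^{2N}/a = q^{1-2m}q^{2m}/q = 1$ — so we really are substituting one for the argument). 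Plugging in gives the right-hand side $(c/a;q^2)_m/(c;q^2)_m = (q^{-2m};q^2)_m/(q^{1-2m};q^2)_m$, and after multiplying back the prefactor $(q;q^2)_m/(q^2;q^2)_m$ and simplifying the Pochhammer symbols with negative powers (using $(q^{-2m};q^2)_m = (-1)^m q^{-m^2-m}(q^2;q^2)_m$ and $(q^{1-2m};q^2)_m = (-1)^m q^{-m^2}(q;q^2)_m$, which follow from the standard negative-index reflection formula), everything telescopes to $1$.

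The main obstacle I anticipate is purely bookkeeping: getting the powers of $q$ exactly right in the reversal step and in the two negative-index Pochhammer simplifications, since an off-by-one or a sign error in the exponent $-m^2-m$ versus $-m^2$ will break the final cancellation. A clean alternative that sidesteps the hypergeometric machinery — and which I would use as a sanity check — is induction on $m$: verify $m=0$ directly (the sum is the single term $1$), and for the inductive step split off the $i=0$ and $i=m$ terms and show the remaining partial sum satisfies a recursion matching $1 - (\text{the two boundary terms})$; this reduces to a single rational-function identity in $q$ that can be cleared to a polynomial identity and checked. Either route closes the lemma; I would present the $q$-hypergeometric one as the ``book proof'' since it explains the title of the section.
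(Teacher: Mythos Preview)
Your plan is exactly the paper's proof: pull out $(q;q^2)_m/(q^2;q^2)_m$ via the reversal identity, recognize the sum as ${}_2\phi_1\bigl(\begin{smallmatrix} q,\,q^{-2m}\\ q^{1-2m}\end{smallmatrix};q^2,z\bigr)$, and evaluate by $q$-Chu--Vandermonde. One concrete correction to your bookkeeping: the reversal step contributes an \emph{extra} factor $q^i$ (so $\frac{(q;q^2)_{m-i}}{(q^2;q^2)_{m-i}}=\frac{(q;q^2)_m}{(q^2;q^2)_m}\frac{(q^{-2m};q^2)_i}{(q^{1-2m};q^2)_i}q^{i}$), and together with the $q^i$ already in the summand this gives $z=q^2$, the base, \emph{not} $1$. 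Hence the form you need is the ``second'' Chu--Vandermonde ${}_2\phi_1(a,q^{-N};c;q,q)=a^N(c/a;q)_N/(c;q)_N$, whose extra factor $a^m=q^m$ is precisely what cancels the $q^{-m}$ arising from your (correct) simplification $(q^{-2m};q^2)_m/(q^{1-2m};q^2)_m=q^{-m}(q^2;q^2)_m/(q;q^2)_m$; with the version you quoted (argument $cq^N/a$) you would end up with $q^{-m}$ rather than $1$. Incidentally, the section title ``substitution of one'' refers to inserting this lemma as a factor of $1$ in the proof of Theorem~\ref{th:ratio-n}, not to the hypergeometric argument.
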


\begin{proof}
	We first rewrite the left-hand side of \eqref{eq:1-sub} in terms of a ${}_{2}\phi_1$ series:
	\begin{align*}
		\sum_{i=0}^m \frac{(q;q^2)_i (q;q^2)_{m-i}}{(q^2;q^2)_i (q^2;q^2)_{m-i}} q^i &= \frac{(q;q^2)_{m}}{(q^2;q^2)_{m}} \sum_{i=0}^m \frac{(q,q^{-2m};q^2)_i q^{2i}}{(q^2,q^{1-2m};q^2)_i}\\
		&= \frac{(q;q^2)_{m}}{(q^2;q^2)_{m}} {}_{2}\phi_1\left(\begin{matrix} q,q^{-2m}\\ q^{1-2m} \end{matrix}; q^2, q^2\right),
	\end{align*}
	where we have applied the relation
	\begin{align*}
		\frac{(q;q^2)_{m-i}}{(q^2;q^2)_{m-i}} = \frac{(q;q^2)_{m}}{(q^2;q^2)_{m}} \frac{(q^{-2m};q^2)_i}{(q^{1-2m};q^2)_i} q^i.
	\end{align*}
	Now applying the \emph{second Chu--Vandermonde sum}~\cite[p.~424, eq.~(17.6.3)]{And2010}:
	\begin{align*}
		{}_{2}\phi_1\left(\begin{matrix} a,q^{-m}\\ c \end{matrix}; q, q\right) = \frac{a^m (c/a;q)_m}{(c;q)_m}
	\end{align*}
	with $(a,c,q)\mapsto (q,q^{1-2m},q^2)$, we have
	\begin{align*}
		{}_{2}\phi_1\left(\begin{matrix} q,q^{-2m}\\ q^{1-2m} \end{matrix}; q^2, q^2\right) = \frac{q^m (q^{-2m};q^2)_m}{(q^{1-2m};q^2)_m} = \frac{(q^2;q^2)_{m}}{(q;q^2)_{m}}.
	\end{align*}
	The claimed relation therefore follows.
\end{proof}

Now we are ready to prove Theorem~\ref{th:ratio-n}.

\begin{proof}[Proof of Theorem~\ref{th:ratio-n}]
	In light of \eqref{eq:1-sub},
	\begin{align*}
		\frac{(q^2;q^2)_n}{(q;q^2)_n^2} &= \frac{(q^2;q^2)_n}{(q;q^2)_n^2} \sum_{i=0}^n \frac{(q;q^2)_i (q;q^2)_{n-i}}{(q^2;q^2)_i (q^2;q^2)_{n-i}} q^i\\
		&= \sum_{i=0}^n \qbinom{n}{i}_{q^2} \frac{q^i}{(q^{2i+1};q^2)_{n-i} (q^{2(n-i)+1};q^2)_i}.
	\end{align*}
	All summands in the above are nonnegative series, and in particular, the term at $i=0$ is $1/(q;q^2)_n$, which is a positive series for any $n\ge 1$. We hence arrive at the desired result.
\end{proof}

We record a corollary of Theorem~\ref{th:ratio-n} for our later use.

\begin{corollary}\label{coro:ratio-n+1}
	For every nonnegative integer $n$, the series $(q^2;q^2)_n/(q;q^2)_{n+1}^2$ is positive.
\end{corollary}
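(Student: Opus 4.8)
The plan is to obtain Corollary~\ref{coro:ratio-n+1} directly from Theorem~\ref{th:ratio-n} by isolating the single extra factor that appears in the denominator. First I would record the elementary factorization
\begin{align*}
	(q;q^2)_{n+1} = (1-q^{2n+1})\,(q;q^2)_n,
\end{align*}
so that
\begin{align*}
	\frac{(q^2;q^2)_n}{(q;q^2)_{n+1}^2} = \frac{(q^2;q^2)_n}{(q;q^2)_n^2} \cdot \frac{1}{(1-q^{2n+1})^2}.
\end{align*}

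Next I would observe that $1/(1-q^{2n+1})^2 = \sum_{j\ge 0}(j+1)\,q^{(2n+1)j}$ is a positive series, and that by Theorem~\ref{th:ratio-n} the first factor $(q^2;q^2)_n/(q;q^2)_n^2$ is a positive series whenever $n\ge 1$. Since the Cauchy product of two power series with nonnegative coefficients and strictly positive constant terms again has strictly positive coefficients, the claimed positivity follows for all $n\ge 1$. For the remaining base case $n=0$, the series in question is simply $1/(1-q)^2 = \sum_{j\ge 0}(j+1)q^j$, which is plainly positive, so this case is handled separately (and trivially).

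I do not anticipate any genuine obstacle: essentially all of the content is carried by Theorem~\ref{th:ratio-n}, and what remains is the routine observation that multiplying by $1/(1-q^{2n+1})^2$ preserves positivity. The only point requiring even a modicum of care is making explicit why a product of two positive series is positive — namely, each coefficient of the product is a finite sum of nonnegative terms containing at least one strictly positive contribution — and this is immediate.
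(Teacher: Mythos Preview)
Your approach is exactly the paper's: split off the extra factor $1/(1-q^{2n+1})^2$, treat $n=0$ separately, and invoke Theorem~\ref{th:ratio-n} for $n\ge 1$.

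One small slip to correct: for $n\ge 1$ the series $1/(1-q^{2n+1})^2$ is \emph{not} positive in the paper's sense (its coefficient of $q$ vanishes), only nonnegative with positive constant term; and the general claim you state --- that a Cauchy product of two nonnegative series with strictly positive constant terms has strictly positive coefficients --- is false (consider $(1+q^2)^2$). The correct and equally immediate fact is that the product of a \emph{positive} series with a nonnegative series having nonzero constant term is positive, and this is precisely what applies here since $(q^2;q^2)_n/(q;q^2)_n^2$ is positive by Theorem~\ref{th:ratio-n}.
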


\begin{proof}
	When $n=0$, the series in question is $1/(1-q)^2$, which is clearly positive. When $n\ge 1$, we rewrite it as
	\begin{align*}
		\frac{(q^2;q^2)_n}{(q;q^2)_{n+1}^2} = \frac{1}{(1-q^{2n+1})^2}\cdot\frac{(q^2;q^2)_n}{(q;q^2)_n^2},
	\end{align*}
	and then use Theorem~\ref{th:ratio-n}.
\end{proof}

\section{The Andrews--El Bachraoui positivity conjecture}

To work on the Andrews--El Bachraoui positivity conjecture, we require a reformulation of the original series $F_{k,1}(q)$, which is due to Banerjee, Bringmann, and Keith \cite[Lemma~3.2]{BBK2026}:
\begin{align}\label{eq:BBK-reform}
	F_{k,1}(q) = \sum_{n\ge 0} \frac{(q^{2k-1};q^2)_n q^{n}}{(q;q^2)_{n+1}}.
\end{align}
For its proof, one simply rewrites $F_{k,1}(q)$ in \eqref{eq:AEB-F} as
\begin{align*}
	F_{k,1}(q) = \frac{(q^2;q^2)_\infty^2}{(q;q^2)_\infty^2 (q^2;q^2)_{k-1}} {}_{2}\phi_1\left(\begin{matrix} q,q\\ q^{2k} \end{matrix}; q^2, q^2\right),
\end{align*}
and then applies \emph{Heine's first tranformation}~\cite[p.~424, eq.~(17.6.6)]{And2010}:
\begin{align*}
	{}_{2}\phi_1\left(\begin{matrix} a,b\\ c \end{matrix}; q, z\right) = \frac{(b,az;q)_\infty}{(c,z;q)_\infty} {}_{2}\phi_1\left(\begin{matrix} c/b,z\\ az \end{matrix}; q, b\right)
\end{align*}
with $(a,b,c,z,q)\mapsto (q,q,q^{2k},q^2,q^2)$.

Taking $k=1$ in \eqref{eq:BBK-reform} gives
\begin{align*}
	F_{1,1}(q) = \sum_{n\ge 0} \frac{q^{n}}{1-q^{2n+1}},
\end{align*}
which is \cite[eq.~(2.2)]{AEB2025} and implies the positivity immediately.

For the moment, we assume $k\ge 2$. To confirm the positivity conjecture, we further need to reformulate \eqref{eq:BBK-reform} in the following way.

\begin{lemma}
	For $k\ge 2$,
	\begin{align}\label{eq:CW-reform}
		F_{k,1}(q) = \sum_{n=0}^{k-2} \qbinom{k-2}{n}_{q^2} \frac{(q^2;q^2)_n q^{2n^2+2n}}{(q;q^2)_{n+1}^2}.
	\end{align}
\end{lemma}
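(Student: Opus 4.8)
The plan is to derive \eqref{eq:CW-reform} from the Banerjee--Bringmann--Keith form \eqref{eq:BBK-reform} by a single application of a classical ${}_{2}\phi_1\to{}_{2}\phi_2$ transformation, set up so that the resulting series terminates automatically. First I would peel off the factor $1-q$ in $(q;q^2)_{n+1}=(1-q)(q^3;q^2)_n$ and read \eqref{eq:BBK-reform} as a basic hypergeometric series in base $q^2$:
\begin{align*}
	F_{k,1}(q)=\frac{1}{1-q}\sum_{n\ge 0}\frac{(q^2,q^{2k-1};q^2)_n}{(q^2,q^3;q^2)_n}q^{n}=\frac{1}{1-q}\,{}_{2}\phi_1\left(\begin{matrix} q^2,q^{2k-1}\\ q^3\end{matrix};q^2,q\right).
\end{align*}

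Next I would apply the transformation
\begin{align*}
	{}_{2}\phi_1\left(\begin{matrix} a,b\\ c\end{matrix};q,z\right)=\frac{(az;q)_\infty}{(z;q)_\infty}\,{}_{2}\phi_2\left(\begin{matrix} a,c/b\\ c,az\end{matrix};q,bz\right)
\end{align*}
(see \cite[eq.~(17.9.3)]{And2010}) with $(a,b,c,z,q)\mapsto(q^2,q^{2k-1},q^3,q,q^2)$. The decisive observation is the choice of which numerator parameter is declared to be $a$: taking $a=q^2$ makes $az=q^3$, so the prefactor collapses to $(q^3;q^2)_\infty/(q;q^2)_\infty=1/(1-q)$, while simultaneously $c/b=q^{4-2k}=q^{-2(k-2)}$, which for $k\ge 2$ terminates the ${}_{2}\phi_2$ after the term $n=k-2$. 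This turns the infinite series for $F_{k,1}(q)$ into
\begin{align*}
	F_{k,1}(q)=\frac{1}{(1-q)^2}\,{}_{2}\phi_2\left(\begin{matrix} q^2,q^{-2(k-2)}\\ q^3,q^3\end{matrix};q^2,q^{2k}\right)=\frac{1}{(1-q)^2}\sum_{n=0}^{k-2}\frac{(q^{-2(k-2)};q^2)_n}{(q^3;q^2)_n^2}(-1)^n q^{n(n-1)+2kn}.
\end{align*}

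Finally I would substitute the standard evaluation $(q^{-2(k-2)};q^2)_n=(-1)^n q^{n(n-1)-2n(k-2)}(q^2;q^2)_n\qbinom{k-2}{n}_{q^2}$, which cancels the stray $(q^2;q^2)_n$ in the summand, and then collect the powers of $q$ (the exponents $2n(n-1)-2n(k-2)+2kn$ simplify to $2n^2+2n$) to get
\begin{align*}
	F_{k,1}(q)=\frac{1}{(1-q)^2}\sum_{n=0}^{k-2}\qbinom{k-2}{n}_{q^2}\frac{(q^2;q^2)_n\,q^{2n^2+2n}}{(q^3;q^2)_n^2};
\end{align*}
absorbing the $1/(1-q)^2$ into the denominator via $(q;q^2)_{n+1}=(1-q)(q^3;q^2)_n$ is precisely \eqref{eq:CW-reform}. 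The main obstacle, as I see it, is purely a matter of recognition: knowing that this ${}_{2}\phi_1\to{}_{2}\phi_2$ identity is available, and spotting the parameter assignment that at once trivializes the infinite-product prefactor and converts the non-terminating factor $(q^{2k-1};q^2)_n$ into a finite sum. Once that is in hand, the rest is routine $q$-Pochhammer bookkeeping, with $k=2$ the degenerate case where the ${}_{2}\phi_2$ collapses to its $n=0$ term.
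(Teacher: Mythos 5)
Your proof is correct and is essentially the paper's own argument: the transformation you invoke, ${}_2\phi_1\left(\begin{smallmatrix} a,b\\ c\end{smallmatrix};q,z\right)=\frac{(az;q)_\infty}{(z;q)_\infty}\,{}_2\phi_2\left(\begin{smallmatrix} a,c/b\\ c,az\end{smallmatrix};q,bz\right)$, reduces exactly to Fine's second transformation (the tool used in the paper) when the numerator parameter $a$ equals the base, which is precisely your specialization $a=q^2$ in base $q^2$. The termination at $n=k-2$, the expansion of $(q^{4-2k};q^2)_n$ via the $q$-binomial coefficient, and the exponent bookkeeping all coincide with the paper's computation.
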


\begin{proof}
	Recall from \eqref{eq:BBK-reform},
	\begin{align*}
		F_{k,1}(q) = \frac{1}{1-q}\cdot {}_{2}\phi_1\left(\begin{matrix} q^2, q^{2k-1}\\ q^3 \end{matrix}; q^2, q\right).
	\end{align*}
	Now all we need is \emph{Fine's second transformation}~\cite[p.~424, eq.~(17.6.10)]{And2010}:
	\begin{align*}
		{}_{2}\phi_1\left(\begin{matrix} q, aq\\ bq \end{matrix}; q, z\right) = \frac{1}{1-z}\sum_{n\ge 0} \frac{(b/a;q)_n (-az)^n q^{(n^2+n)/2}}{(bq,zq;q)_n}.
	\end{align*}
	With the substitution $(a,b,z,q)\mapsto (q^{2k-3},q,q,q^2)$, we have
	\begin{align*}
		F_{k,1}(q) = \frac{1}{1-q}\cdot \frac{1}{1-q} \sum_{n=0}^{k-2} \frac{(q^{4-2k};q^2)_n (-q^{2k-2})^n q^{n^2+n}}{(q^3;q^2)_n^2}.
	\end{align*}
	Applying the relation
	\begin{align*}
		(q^{4-2k};q^2)_n &= (-1)^n q^{(4-2k)n+n^2-n} (q^{2k-4};q^{-2})_n\\
		&= (-1)^n q^{(4-2k)n+n^2-n} (q^2;q^2)_n \qbinom{k-2}{n}_{q^2},
	\end{align*}
	we finally obtain the claimed identity.
\end{proof}

Now the positivity of $F_{k,1}(q)$ is a direct consequence of \eqref{eq:CW-reform}.

\begin{proof}[Proof of Theorem~\ref{th:AEB}]
	It remains to consider the cases where $k\ge 2$. In \eqref{eq:CW-reform}, it is apparent that all summands
	\begin{align*}
		\qbinom{k-2}{n}_{q^2} \frac{(q^2;q^2)_n q^{2n^2+2n}}{(q;q^2)_{n+1}^2}
	\end{align*}
	are nonnegative series because of Corollary~\ref{coro:ratio-n+1}. In particular, the term at $n=0$ is $1/(1-q)^2$, which is further a positive series. The positivity of the Andrews--El Bachraoui series $F_{k,1}(q)$ therefore holds.
\end{proof}

\subsection*{Acknowledgements}

Shane Chern was supported by the Austrian Science Fund (Grant No.~10.55776/F1002). Chun Wang was partially supported by the
National Natural Science Foundation of China
(Grant No.~12571354), Shanghai Rising--Star Program (Grant No.~23QA1407300), and China Scholarship Council (Grant No.~202508310241).

\bibliographystyle{amsplain}

\end{document}